\newcommand{\rrVert}{\Vert}
\newcommand{\rrvert}{\vert}
\newcommand{\llVert}{\Vert}
\newcommand{\llvert}{\vert}
\renewcommand{\epsilon}{\varepsilon}
\newcommand{\eqref}[1]{(\ref{#1})}
\renewcommand{\citep}{\cite}
\newcommand{\trup}[2]{{#1}/{#2}}
\newcommand{\mr}{\mathbb{R}}
\newcommand{\mn}{\mathbb{N}}
\newcommand{\me}{\mathbb{E}}
\newcommand{\mpr}{\mathbb{P}}
\newcommand{\mprn}{\mathbb{P}_n}
\newcommand{\undz}{\underline{Z}}
\newcommand{\ovez}{\overline{Z}}
\newtheorem{theorem}{Theorem}[section]
\newtheorem{corollary}{Corollary}[section]
\newtheorem{lemma}{Lemma}[section]
\begin{document}
\begin{frontmatter}

\title{New concentration inequalities for suprema of empirical processes}
\runtitle{Concentration of empirical processes}

\begin{aug}
\author{\inits{J.}\fnms{Johannes} \snm{Lederer}\corref{}\thanksref{e1}\ead[label=e1,mark]{johanneslederer@mail.de}} \and
\author{\inits{S.}\fnms{Sara} \snm{van de Geer}\thanksref{e2}\ead[label=e2,mark]{geer@stat.math.ethz.ch}}
\address{Seminar f\"ur Statistik, ETH Z\"urich,
R\"amistrasse 101,
8092 Z\"urich, Switzerland.\\ \printead{e1,e2}}
\end{aug}

\received{\smonth{2} \syear{2012}}
\revised{\smonth{5} \syear{2013}}

%
\begin{abstract}
While effective concentration inequalities for suprema of empirical
processes exist under boundedness or strict tail assumptions, no comparable
results have been available under considerably weaker assumptions. In this
paper, we derive concentration inequalities assuming only low moments for
an envelope of the empirical process. These concentration inequalities
are beneficial even when the envelope is much larger than the single
functions under consideration.
\end{abstract}

%
\begin{keyword}
\kwd{chaining}
\kwd{concentration inequalities}
\kwd{deviation inequalities}
\kwd{empirical processes}
\kwd{rate of convergence}
\end{keyword}

\end{frontmatter}

\section{Introduction}

Powerful concentration and deviation inequalities for suprema of empirical
processes have been derived during the last 20 years. These inequalities
turned out to be crucial for example, in the study of consistency and rates
of convergence for many estimators. Unfortunately, the known inequalities
are only valid for bounded empirical processes or under strict tail
assumptions. So, this paper was prompted by the question whether useful
inequalities can be obtained under considerably weaker assumptions.

Let us first set the framework, starting with a brief summary of the known
results for bounded empirical processes, or more precisely, for empirical
processes index by bounded functions. To this end, we consider independent
and identically distributed random variables $X_1,\ldots,X_n$ and a countable
function class $\mathcal F$ such that $\sup_{f\in\mathcal{F}}\Vert
f\Vert_\infty\leq1$ and $\sup_{f\in\mathcal{F}}|\me f(X_1)|=0$. The
quantity of interest is denoted by $Y:=\sup_{f\in\mathcal{F}}|\frac
{1}{n}\sum_{i=1}^n
f(X_i)|$ and the
root of the maximal variance by $\sigma:=\sup_{f\in\mathcal
{F}}\sqrt{\me[f(X_1)]^2}$. Refining Rio's
proof in \citep{Rio02} (see also \citep{Massart07}, Chapter~5.3, for the
proof techniques), Bousquet derives
in \citep{Bousquet02} an exponential deviation inequality for $Y$. His
result implies
%
%
\begin{equation}
\label{bous} \mpr \biggl(Y-(1+\epsilon)\me Y\geq\sigma\sqrt{2x}+ \biggl(
\frac
{1}{\epsilon}+\frac{1}{3} \biggr)x \biggr)\leq \mathrm{e}^{-nx}
\qquad\mbox{for all }x,\epsilon>0.
\end{equation}
For many statistical applications, it is important to have bounds
like $\sigma\sqrt{2x}+ (\frac{1}{\epsilon}+\frac{1}{3}
)x$ and
$\mathrm{e}^{-nx}$ that are, apart from the assumptions, completely
independent of the functions $f$; the parameter
$\epsilon> 0$ is inserted to obtain such bounds. Exponential inequalities
for bounded empirical processes similar to the one above have been
found by Klein and Rio
\citep{Klein05} and by Massart \citep{Massart00}. These inequalities
are slightly
less sharp, but additionally hold for nonidentically
distributed random variables and also for $-Y$. The derivations
of the mentioned results rely on the entropy method (initiated by
Ledoux in \citep{Ledoux97}), which provided a new approach to the
results in Talagrand's seminal work \citep{Talagrand96b}. For an
overview of the techniques involved, we refer to the textbooks
\cite{Boucheron13,LeTal91,Massart07}.

Results are also known for possibly unbounded empirical processes that have
weak tails. We consider independent and identically distributed random
variables $X_1,\ldots,X_n$ and a function class $\mathcal F$ such that
$\sup_{i,f\in\mathcal{F}}|\me f(X_i)|=0$ and
$\operatorname{card}\mathcal{F}=p$. We additionally assume that Bernstein
conditions are fulfilled, that is, $\sup_{f\in\mathcal{F}}\frac
{1}{n}\sum_{i=1}^n\me
|f(X_i)|^m\leq\frac{m!}{2}K^{m-2}$, $m=2,3,\ldots$ for a constant
$K$. B\"uhlmann and van de Geer then derive in \citep{Buhlmann11} the
following exponential deviation inequality for $Y:=\sup_{f\in\mathcal
{F}}|\frac{1}{n}\sum_{i=1}^nf(X_i)|$:
\[
\mpr \biggl(Y -\sqrt{\frac{2\log(2p)}{n}} -\frac{K\log(2p)}{n} \geq Kx +\sqrt{2x}
\biggr)\leq \mathrm{e}^{-nx}\qquad\mbox{for all }x>0.
\]
The lower bounds $Kx +\sqrt{2x}$ and
$\mathrm{e}^{-nx}$ are again independent of the functions $f$. Besides
the classical results for Gaussian processes (see, e.g.,
\cite{Boucheron13} and the references therein), other exponential
bounds for
unbounded
empirical processes are given by Adamczak in \citep{Adamczak08} and by van
de Geer and Lederer in \citep{vdGeer11b}. These authors assume weak
tails with respect to suitable Orlicz norms.

But what if the empirical process is unbounded and does not fulfill the
strict tail assumptions mentioned above? There is no hope to derive
exponential bounds as above under considerably weaker
assumptions. However, we show in the following that weak moment assumptions
are sufficient to obtain useful moment type
concentration inequalities. For this purpose, we
consider independent, not necessarily identically
distributed random variables $X_1,\ldots,X_n$ and a countable
function class $\mathcal F$ with an envelope that has $p$th moment at
most $M^p$ for a $M>0$ and a $p\in[1,\infty)$. Our main result,
Theorem~\ref{lemma.ConIn2.FirstCor}, implies then for
$Y:=\sup_{f\in\mathcal{F}}|\frac{1}{n}\sum_{i=1}^nf(X_i)|$,
$\sigma:=\sup_{f\in\mathcal{F}}\sqrt{\me[f(X_1)]^2}$, $1\leq
l\leq p$,
$(\cdot)_+:=\max\{0,\cdot\}$, $\|\cdot
\|_l:= (\me[ \cdot]^l )^{\trup{1}{l}}$ and for all
$\epsilon>0$
\[
\bigl\llVert \bigl(Y-(1+\epsilon)\me{Y} \bigr)_+\bigr\rrVert _l\leq
\biggl(\frac{64}{\epsilon}+7+\epsilon \biggr) \biggl(\frac
{l}{n}
\biggr)^{1-\trup{l}{p}} M+4\sqrt{\frac{l}{n}} \sigma
\]
and
\[
\bigl\llVert \bigl((1-\epsilon)\me{Y}-Y \bigr)_+\bigr\rrVert _l\leq
\biggl(\frac{86.4}{\epsilon}+7-\epsilon \biggr) \biggl(\frac
{l}{n}
\biggr)^{1-\trup{l}{p}} M+4.7\sqrt{\frac{l}{n}} \sigma.
\]
We argue in Section~\ref{sec.M1} that these bounds are
especially useful in
the common case where the envelope (measured by $M$) is much larger than
the single functions (measured by ${\sigma}$). We also stress that the
empirical process is present on the right-hand sides only through the
quantities $M$ and $\sigma$, which can be considered as properties of
the single random variables $f(X_i)$, unlike in known maximal
inequalities, which directly involve $\me Y$
or the entropy of the function set $\mathcal F$ (see, e.g.,
\citep{Massart07}, Chapter~6, and
\citep{vdVaart11}) at the corresponding spots. To obtain this, a
parameter $\epsilon>0$ is required as above.

We close this section with a short outline of the paper. In
Section~\ref{sec.Guideline}, we give the basic definitions and
assumptions. In
Section~\ref{sec.M1}, we then state and discuss the main result. This is
followed by complementary bounds in Section~\ref{sec.M2}. Detailed
proofs are finally given in Section~\ref{sec.proofs}.

\section{Random vectors, concentration inequalities and envelopes}
\label{sec.Guideline}

We are mainly interested in the behavior of \textit{suprema of empirical
processes}
%
%
\begin{equation}
\label{eq.intro.ep} Y:=\sup_{f\in\mathcal F}\Biggl\llvert \frac{1}{n}\sum
_{i=1}^nf(X_i)\Biggr\rrvert
\quad\mbox{or}\quad Y:=\sup_{f\in\mathcal F}\Biggl\llvert \frac
{1}{n}
\sum_{i=1}^n \bigl(f(X_i)-\me
f(X_i) \bigr)\Biggr\rrvert
\end{equation}
for large $n$. Here, $X_1,\ldots,X_n$ are independent, not necessarily
identically distributed random variables and $\mathcal{F}$ is a
countable family of
real, measurable functions. In the sequel, we may restrict
ourselves to finitely many functions by virtue of the monotonous convergence
theorem.

\textit{Random vectors} generalize the notion of empirical
processes. Let $\mathcal{Z}_1,\ldots,\mathcal{Z}_n$ be arbitrary probability
spaces and $\{Z_i(j)\dvtx\mathcal{Z}_i\to\mr, 1\leq j\leq N, 1\leq
i\leq
n\}$
a set of random variables. We then define the random vectors as
$Z(j):=(Z_1(j),\ldots,Z_n(j))^T\dvtx\mathcal{Z}_1\times
\cdots\times  \mathcal{Z}_n\to\mr^n$. For convenience, we introduce
their mean
as $ P Z(j):=\frac{1}{n}\sum_{i=1}^n\me Z_i(j)$, their empirical mean as
$ \mpr_nZ(j):=\frac{1}{n}\sum_{i=1}^nZ_i(j)$, and the root of their
maximal second moment as
$\sigma:=\max_{1\leq j \leq N}\sqrt{\frac{1}{n}\sum_{i=1}^n\me
Z_i(j)^2}$ (all
assumed to be finite). Throughout this paper, we
then consider the generalized formulation of \eqref{eq.intro.ep}
%
%
\begin{equation}
\label{eq.intro.rv} Z:=\max_{1\leq j\leq N}\bigl |\mprn Z(j)\bigr |.
\end{equation}
The corresponding results for the empirical
processes \eqref{eq.intro.ep} can be found via $Z_i(j):= f_j(X_i)$ or
$Z_i(j):=f_j(X_i)-\me
f_j(X_i)$ for $\mathcal F=\{f_1,\ldots,f_N\}$.

The basic assumption on the random vectors is expressed using
\textit{envelopes}. First, we call $\mathcal{E}:=(\mathcal
{E}_1,\ldots
,\mathcal{E}_n)^T \dvtx\mathcal{Z}_1\times
\cdots\times\mathcal{Z}_n\to\mr^n$ an envelope if $|Z_i(j)|\leq
\mathcal{E}_i$ for all $1\leq j\leq
N$ and $1\leq i \leq n$. The basic assumption of this paper is then that
there is a $p\in[1,\infty)$ and an $M>0$ such that
%
%
\begin{equation}
\me\mathcal{E}_i^p\leq M^p
\end{equation}
for all
$1\leq i \leq n$. To allow for an extension to countably infinite
families of functions $\mathcal F$ via the monotoneous convergence
theorem, we assume that the constant $M$ is independent of $N$.
Finally, we stress that the envelope $\mathcal{E}$ is typically much
larger than the single
random vectors $Z(j)$, that is, $M\gg\sigma$.

Asymptotically ($n\to\infty$), the processes \eqref{eq.intro.ep} and
\eqref{eq.intro.rv}
are typically governed by the central limit theorem. We study in this paper,
however, the nonasymtotic behavior ($n$ finite) of the process
\eqref{eq.intro.rv} (and
thus of \eqref{eq.intro.ep}). For $n$ finite, \textit{concentration
inequalities} provide bounds for the deviations in both directions from
the mean or
related quantities. Similarly, \textit{deviation
inequalities} provide bounds for the deviation in one
direction only. We are
especially interested in bounds that depend only on $n$, $M$, $\sigma
$, and
$p$. The bounds should, in particular, not depend on the functions
$f$ and therefore not on $\me Z$ or $\mathcal F$.

\section{Main result}
\label{sec.M1}

We are mainly concerned with concentration inequalities for unbounded
empirical processes that only
fulfill weak moment conditions. In particular, we are interested in bounds
that only depend on $n$, $M$, $\sigma$, and $p$ and incorporate empirical
processes with envelopes that may be much larger than the single
functions under consideration.

The following theorem is the main result of this paper.
%
%
\begin{theorem}
\label{lemma.ConIn2.FirstCor}
For $1\leq l\leq p$ and all $\epsilon>0$ it holds that
\[
\bigl\llVert \bigl(Z-(1+\epsilon)\me{Z} \bigr)_+\bigr\rrVert _l\leq
\biggl(\frac{64}{\epsilon}+7+\epsilon \biggr) \biggl(\frac
{l}{n}
\biggr)^{1-\trup{l}{p}} M+4\sqrt{\frac{l}{n}} \sigma
\]
and
\[
\bigl\llVert \bigl((1-\epsilon)\me{Z}-Z \bigr)_+\bigr\rrVert _l\leq
\biggl(\frac{86.4}{\epsilon}+7-\epsilon \biggr) \biggl(\frac
{l}{n}
\biggr)^{1-\trup{l}{p}} M+4.7\sqrt{\frac{l}{n}} \sigma.
\]
\end{theorem}

As discussed in the preceding section, we state our results in
terms of random vectors instead of empirical processes. The connection can
be made as described. Furthermore, we note that a considerable improvement
with respect to $l$ does not seem to be possible. Slightly better constants
can be obtained, however, at the price of less incisive bounds or less
accessible proofs (see Remark~\ref{rm.constants} in the proofs
section). We finally
note that the expectation $\me Z$ can be replaced by suitable
approximations. Such approximations are usually found with chaining and
entropy (see, e.g., \citep{dudley1967sizes,vdVaart00,vdVaart11})
or generic chaining (see, e.g.,
\citep{Fernique75,Talagrand96,Talagrand05}).

Let us now have a closer look at the above result. In contrast to the
known results given in the introduction, the single functions may be
unbounded and may only fulfill weak moment conditions. For the
envelope, the moment restrictions are increasing with
increasing power $l$, as expected.

And what about large envelopes, that is $M\gg
{\sigma}$? Theorem~\ref{lemma.ConIn2.FirstCor}
separates the part including the size of the envelope (measured by $M$)
from the part
including the size of the single random vectors (measured by
${\sigma}$). For $p>2l$ and $n
\gg1$, a possibly large value of $M$ is counterbalanced by
$\frac{1}{n^{1-\trup{l}{p}}}\ll\frac{1}{\sqrt n} $ and thus, the influence
of large envelopes is tempered. In particular, the term
including the size of the envelope can be neglected for
$n\to\infty$ if $p$ is sufficiently large.

We conclude this section with two straightforward consequences of
Theorem~\ref{lemma.ConIn2.FirstCor} and an additional remark.
%
%
\begin{corollary}
Theorem~\ref{lemma.ConIn2.FirstCor} directly implies
probability bounds via
Chebyshev's inequality. Under the above assumptions, it holds for $x > 0$
\[
\mpr \bigl(Z\geq(1+\epsilon)\me{Z} +x \bigr)\leq\min_{1\leq l
\leq
p}
\frac{ (  ((\trup{64}{\epsilon})+7+\epsilon )
(\trup{l}{n} )^{1-\trup{l}{p}} M+4\sqrt\trup{l}{n} \sigma
)^l}{x^l}
\]
and similarly
\[
\mpr \bigl(Z\leq(1-\epsilon)\me{Z} -x \bigr)\leq\min_{1\leq l
\leq
p}
\frac{ (  ((\trup{86.4}{\epsilon})+7-\epsilon
)
(\trup{l}{n} )^{1-\trup{l}{p}} M+4.7\sqrt{\trup{l}{n}}
\sigma )^l}{x^l}
.
\]
\end{corollary}

If $\sigma$, $M$, $\epsilon$, and $ p-2l$ are strictly positive
constants, this implies the logarithmic rate $(\ln(n))^{- l/2}$
for $\mpr (Y - (1+\epsilon)\me{Y} \geq\sqrt{\ln(n)/n}
)$. This rate can be directly compared to the corresponding polynomial
rates resulting from \eqref{bous} (with $x\sim\ln(n)/n$) to observe
that the avoiding of the boundedness assumption causes slower rates, as
expected. 

%
\begin{corollary}
Concrete first order bounds under the above
assumptions are for example
\[
\me [Z-2\me{Z} ]_+ \leq 72 \frac{M}{n^{1-\trup{1}{p}}} + 4 \frac{\sigma}{\sqrt n}
\]
and
\[
\me \biggl[\frac{1}{2}\me{Z}-Z \biggr]_+ \leq179.3 \frac{M}{n^{1-\trup{1}{p}}} +
4.7 \frac{\sigma}{\sqrt n}.
\]
\end{corollary}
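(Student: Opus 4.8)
The plan is to obtain both inequalities as the first-order (\emph{i.e.}~$l=1$) specializations of Theorem~\ref{lemma.ConIn2.FirstCor} for two particular choices of the free parameter $\epsilon$; no new estimate is needed, only a substitution followed by arithmetic simplification of the constants.

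First I would treat the upper bound. Setting $l=1$ in the first inequality of Theorem~\ref{lemma.ConIn2.FirstCor} removes the outer exponent and the factors $\sqrt l$, leaving
\begin{equation*}
  \me\left[Z-(1+\epsilon)\me Z\right]_+\leq\left(\frac{64}{\epsilon}+7+\epsilon\right)\frac{M}{n^{1-\frac{1}{p}}}+\frac{4\sigma}{\sqrt n}.
\end{equation*}
Choosing $\epsilon=1\in\mr^+$ turns the left side into $\me[Z-2\me Z]_+$ and collapses the envelope coefficient to $64+7+1=72$, which yields the first displayed bound. The only points to verify are that $\epsilon=1$ lies in the admissible range $\mr^+$ and that $l=1$ respects the constraint $1\leq l\leq p$, which holds since $p\in[1,\infty)$.

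For the lower bound I would argue identically using the second inequality of the theorem. With $l=1$ it reads
\begin{equation*}
  \me\left[(1-\epsilon)\me Z-Z\right]_+\leq\left(\frac{86.4}{\epsilon}+7-\epsilon\right)\frac{M}{n^{1-\frac{1}{p}}}+\frac{4.7\sigma}{\sqrt n},
\end{equation*}
now valid for $\epsilon\in(0,1]$. Taking $\epsilon=\tfrac12$ converts the left side into $\me[\tfrac12\me Z-Z]_+$ and gives the envelope coefficient $86.4/(1/2)+7-1/2=172.8+6.5=179.3$, which is exactly the second displayed bound. Here the extra care is simply to confirm $\tfrac12\in(0,1]$ so that the lower-deviation inequality applies.

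The main ``obstacle'' is therefore purely bookkeeping: one must keep the two inequalities of Theorem~\ref{lemma.ConIn2.FirstCor} distinct (they use different constants $64$ versus $86.4$ and $4$ versus $4.7$, and different ranges for $\epsilon$) and carry the numerical simplification correctly. Since $l=1$ trivializes the powers and square roots, there is no genuine analytic difficulty, and the corollary follows immediately from the theorem.
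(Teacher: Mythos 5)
Your proposal is correct and matches the paper's intended derivation: the corollary is presented as a ``straightforward consequence'' of Theorem~\ref{lemma.ConIn2.FirstCor}, obtained exactly by setting $l=1$ with $\epsilon=1$ for the upper bound ($64+7+1=72$) and $\epsilon=\tfrac12$ for the lower bound ($172.8+6.5=179.3$). The arithmetic and the admissibility checks on $l$ and $\epsilon$ are all in order.
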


%
\begin{remark}
Allowing the right-hand side in Theorem~\ref{lemma.ConIn2.FirstCor} to
depend on $\me Z$, we can avoid
the parameter $\epsilon$ and find for example
\[
\bigl\llVert (Z-\me Z )_+\bigr\rrVert _l\leq 10.2 \biggl(
\frac{l}{n} \biggr)^{1-\trup{l}{p}} M+\sqrt{32 \biggl(\frac{l}{n}
\biggr)^{1-\trup{l}{p}}M\me Z}+ \sqrt{\frac{2l}{n}} \sigma
\]
and a similar bound for $\llVert (\me Z-Z)_+\rrVert _l$. For the
proof, one
can proceed similarly as in
Remark~\ref{rm.constants} and Lemma~\ref{thm.ConIn2.wmass2}. These
kinds of results are, however, of less
statistical importance.
\end{remark}

\section{Complementary bounds}
\label{sec.M2}

In this section, we complement the main result
Theorem~\ref{lemma.ConIn2.FirstCor} with two additional bounds.
These
additional bounds can be of interest if $l$ is close to $p$.

The first result reads as the following theorem.
%
%
\begin{theorem}\label{theorem.Massart2} Assume that the random variables
$Z_i(j)$ are centered. For $1\leq l\leq p$ it holds that
\[
\bigl\llVert (Z-4\me Z )_+\bigr\rrVert _l\leq \bigl(l \Gamma (l/2 )
\bigr)^{\trup{1}{l}}\sqrt{\frac
{32}{n}} M,
\]
where ${\Gamma}$ is the usual Gamma function.
\end{theorem}

Let us compare Theorem~\ref{theorem.Massart2}
with Theorem~\ref{lemma.ConIn2.FirstCor}. On the one hand, the above
result does not possess the flexibility of the factor $(1+\epsilon)$
and is
a deviation inequality only. On the other hand, the term including the size
of the envelope $M$ is independent of $p$ and has a
different power of $n$ in the denominator compared to
the corresponding term in Theorem~\ref{lemma.ConIn2.FirstCor}. Comparing
these two terms in detail, we find that the bound of
Theorem~\ref{lemma.ConIn2.FirstCor} may be sharper than the
corresponding bound in Theorem~\ref{theorem.Massart2} if $l\leq p <
2l$.

We finally give explicit deviation inequalities for $Z$ in the case of
finitely many random
vectors. For $p\geq2$, explicit bounds are found immediately by
replacing $\me Z$ in
Theorem~\ref{lemma.ConIn2.FirstCor} or Theorem~\ref{theorem.Massart2}
by the upper bound $\sqrt{\frac{8\log
(2N)}{n}}M$ (see \cite{Duembgen09}). Another bound is found by an
approach detailed in Section~\ref{sec.proofs}. The bound reads as follows.
%
%
\begin{theorem}\label{sec4.Corrollary1}
Assume that the random variables
$Z_i(j)$ are centered. Then, for
$p\geq2$, $l\in\mn$, and $p\geq l$,
\[
\biggl\llVert \biggl(Z-2M\frac{\log(2N)}{\sqrt n} \biggr)_+\biggr\rrVert
_l\leq \sqrt{\frac{35}{n}} l M.
\]
\end{theorem}

This can supersede the bound in Theorem~\ref{theorem.Massart2}
for $\log(2N)\leq32$.

\section{Proofs}
\label{sec.proofs}

In this last section, we give detailed proofs.

\subsection{Proof of Theorem \texorpdfstring{\protect\ref{lemma.ConIn2.FirstCor}}{3.1}}

The key idea of our proofs is to introduce an appropriate truncation that
depends on the envelope of the
empirical process. This allows us to split the problem into two parts
that can be
treated separately: On the one hand, a part corresponding to a bounded
empirical process that can be treated by convexity
arguments and Massart's
results on bounded random vectors \citep{Massart00}. And
on the other hand, a part corresponding to an unbounded empirical process
that can be treated by rather elementary means.

For ease of exposition, we present some convenient
notation for the truncation first. After deriving a simple auxiliary
result, we then turn
to the main task of this section: We first consider the truncated part of
the problem in Lemma~\ref{lemma.boundstrunc}
and then prove Lemma~\ref{thm.ConIn2.wmass2}, a generalization
of Theorem~\ref{lemma.ConIn2.FirstCor}.

A basic tool used in this section is \textit{truncation}. Before
turning to the proofs, we want to give some additional notation for this
tool. First,\vadjust{\goodbreak} we define the \textit{unbounded} and the
\textit{bounded part of the random vectors} as
\begin{eqnarray*}
\overline{Z}(j)&:=&\bigl(\overline{Z}_1(j),\ldots,\overline
{Z}_n(j)\bigr)^T:=\bigl(Z_1(j)1_{\{\mathcal{E}_1>K\}},
\ldots,Z_n(j)1_{\{
\mathcal{E}_n>K\}}\bigr)^T,
\\
\underline{Z}(j)&:=&\bigl(\underline{Z}_1(j),\ldots,\underline
{Z}_n(j)\bigr)^T:=\bigl(Z_1(j)1_{\{\mathcal{E}_1\leq
K\}},
\ldots,Z_n(j)1_{\{\mathcal{E}_n\leq
K\}}\bigr)^T.
\end{eqnarray*}
Similarly, we define
\begin{eqnarray*}
\overline{\mathcal E}&:=&(\overline{\mathcal{E}}_1,\ldots,\overline
{\mathcal{E}}_n)^T:=(\mathcal{E}_11_{\{\mathcal{E}_1>K\}},
\ldots ,\mathcal{E}_n1_{\{\mathcal{E}_n>K\}})^T,
\\
\underline{\mathcal{E}}&:=&(\underline{\mathcal{E}}_1,\ldots ,
\underline{\mathcal{E}}_n)^T:=(\mathcal{E}_11_{\{\mathcal{E}_1\leq
K\}},
\ldots,\mathcal{E}_n1_{\{\mathcal{E}_n\leq
K\}})^T.
\end{eqnarray*}
To prevent an overflow of indices, the \textit{truncation level}
$K>0$ is not included explicitly in the notation. The
truncation level is, however, given at the adequate places so that there
should not be any confusion. Finally, we define the maxima of the
truncated random variables as
\[
\ovez:=\max_{1\leq j\leq
N}\bigl |\mprn\ovez(j)\bigr |\quad\mbox{and}\quad \undz:=
\max_{1\leq j\leq
N}\bigl |\mprn\undz(j)\bigr |
\]
and the maximal variance of the bounded parts as
\[
{\underline{\sigma}}:=\max_{1\leq j \leq N}\sqrt{
\frac{1}{n}\sum_{i=1}^n
\operatorname{Var}\undz_i(j)}.
\]
Now we derive a simple auxiliary lemma.
%
%
%
%
\begin{lemma}\label{lemma.ConIn.Verg}
Under the assumptions of Theorem~\ref{lemma.ConIn2.FirstCor}, it holds
that $\sigma\geq\underline{\sigma}$ and
\[
\bigl |\me[\undz-Z]\bigr |^l\leq\frac{M^p}{K^{p-l}}
\]
for the truncation level $K>0$.
\end{lemma}

\begin{pf}
The first assertion is straightforward. For the second assertion, since
$||a|-|b||\leq|a-b|$ for all $a,b\in\mr$, we observe that
\begin{eqnarray*}
\bigl\llvert \me[\undz-Z]\bigr\rrvert &=&\Bigl\llvert \me \Bigl[\max
_{1\leq j\leq
N}\bigl |\mprn\undz(j)\bigr |-\max_{1\leq j\leq
N}\bigl |\mprn Z(j)\bigr |
\Bigr]\Bigr\rrvert
\\
&\leq&\me \Bigl[\max_{1\leq j\leq
N}\bigl |\bigl |\mprn\undz(j)\bigr |-\bigl |\mprn Z(j)\bigr |\bigr |
\Bigr]
\\
&\leq&\me \Bigl[\max_{1\leq j\leq
N}\bigl |\mprn\bigl( \undz(j)-Z(j)\bigr)\bigr |
\Bigr]
\\
&=&\me \Bigl[\max_{1\leq j\leq
N}|\mprn\ovez| \Bigr]
\\
&\leq&\me \Biggl[\frac{1}{n}\sum_{i=1}^n
\overline{\mathcal {E}}_i \Biggr].
\end{eqnarray*}
With H\"older's and Chebyshev's inequality, we obtain for $1\leq i\leq n$
\begin{eqnarray*}
\me\overline{\mathcal{E}}_i^l &=& \me
\mathcal{E}_i^l 1_{\{\mathcal{E}_i>K\}}
\\
&\leq& \bigl(\me\mathcal{E}_i^p\bigr)^{\trup{l}{p}} (
\me 1_{\{\mathcal{E}_i>K\}})^{1-\trup{l}{p}}
\\
&\leq& \bigl(\me\mathcal{E}_i^p\bigr)^{\trup{l}{p}}
\biggl(\frac{\me
\mathcal
{E}_i^p}{K^p} \biggr)^{1-\trup{l}{p}}
\\
&\leq& \frac{M^p}{K^{p-l}}.
\end{eqnarray*}
These two results and Jensen's inequality yield then the second assertion.\vadjust{\goodbreak}
\end{pf}

We can now turn to the harder part of this section. We first consider
bounded random vectors in Lemma~\ref{lemma.boundstrunc}. We then proof
a bound for
unbounded random vectors in Lemma~\ref{thm.ConIn2.wmass2}, from which
the main result can be
deduced easily.

%
\begin{lemma}
\label{lemma.boundstrunc}
Let $1\leq l\leq p$, $\epsilon>0$ and denote by $K>0$ the truncation
level. Then,
\[
\bigl\llVert \bigl(\underline{Z}-(1+\epsilon)\me\underline{Z} \bigr)_+\bigr
\rrVert _l\leq \biggl(\frac{64}{\epsilon}+5 \biggr)\frac
{lK}{n}+
\frac{4\sqrt{l}\underline{\sigma}}{\sqrt n}
\]
and
\[
\bigl\llVert \bigl((1-\epsilon)\me\underline{Z}-\underline{Z} \bigr)_+\bigr
\rrVert _l\leq \biggl(\frac{86.4}{\epsilon}+5 \biggr)\frac{lK}{n}+
\frac{4.7\sqrt {l}\underline{\sigma}}{\sqrt n}.
\]
\end{lemma}

\begin{pf}
The key idea is to use convexity arguments so that we can apply well-known
bounds for bounded random vectors.

To begin,
we set $ J:= ({32}/{\epsilon}+2.5 )K$ and $I:= ({2(l
-1)J}/n+{\sqrt{8(l
-1)}\underline{\sigma}}/{\sqrt n} )^l$
and then define the
function $g_l\dvtx\mr^+\to(1,\infty)$ as
\[
g_l (x):=\mathrm{e}^{(\trup{n}{(2J^2)}) (\sqrt{2\underline
{\sigma
}^2+J(x\vee I)^{\trup{1}{l}}}-\sqrt{2}\underline{\sigma} )^2}.
\]
We used here the notation $a\vee
b:=\max\{a,b\}$ for $a,b\in\mr$. The function $g_l$ is strictly
increasing, smooth, convex on the interval $(I,\infty)$, and its
inverse on $(1,\infty)$ is given by
%
%
\begin{equation}
\label{eq.inverse} g_l^{-1}(y)= \biggl(\frac{2J}{n}
\log{y}+\frac{4\underline
{\sigma}}{\sqrt n}\sqrt{\log y} \biggr)^l.
\end{equation}
The straightforward derivations of these facts are omitted for the sake
of brevity.

The convexity of the function $g_l$ makes it possible to apply a result
of \citep{Massart00}. To show this, we introduce
\[
X:=\bigl(\undz-(1+\epsilon)\me\undz\bigr)_+^l
\]
and find with Jensen's inequality and the fact that $g_l$ is increasing
\[
g_l(\me X)\leq g_l\bigl(\me[X\vee I]\bigr)\leq\me
g_l(X\vee I).
\]
Massart's inequality \citep{Massart00}, Theorem~4,
(13), for bounded random vectors translates then to our
setting as
\[
\mpr\biggl(n\undz\geq(1+\epsilon)n\me\undz+\underline{\sigma}\sqrt {8nx}+
\biggl(\frac{32}{\epsilon}+2.5 \biggr)Kx\biggr)\leq\mathrm{e}^{-x},
\]
where $x >0$. This is equivalent to
%
%
\begin{equation}
\label{eq.ConIn2.massart} \mpr\biggl(\undz\geq(1+\epsilon)\me\undz+\underline{\sigma}\sqrt
{\frac{8x}{n}}+\frac{J}{n}x\biggr)\leq\mathrm{e}^{-x}.
\end{equation}
We now deduce (cf. \cite{vdGeer11b})
\begin{eqnarray*}
&&\me \bigl[ \mathrm{e}^{(\trup{n}{(2J^2)}) (\sqrt
{2\underline{\sigma
}^2+J(X\vee I)^{\trup{1}{l}}}-\sqrt{2}\underline{\sigma}
)^2} \bigr]
\\
&&\quad= \int_0^\infty\mpr \bigl(
\mathrm{e}^{(\trup
{n}{(2J^2)}) (\sqrt
{2\underline{\sigma}^2+J(X\vee I)^{\trup{1}{l}}}-\sqrt
{2}\underline
{\sigma} )^2}> t \bigr)\,\mathrm{d}t
\\
&&\quad\leq1+\int_1^\infty\mpr \bigl(
\mathrm{e}^{(\trup
{n}{(2J^2)}) (\sqrt
{2\underline{\sigma}^2+J(X\vee I)^{\trup{1}{l}}}-\sqrt
{2}\underline
{\sigma} )^2}> t \bigr)\,\mathrm{d}t
\\
&&\quad= 1+\int_1^\infty\mpr \biggl(\sqrt{2
\underline{\sigma }^2+J(X\vee I)^{\trup{1}{l}}}> \sqrt{2}\underline{
\sigma}+\sqrt{\frac
{2J^2}{n}\log t} \biggr)\,\mathrm{d}t
\\
&&\quad= 1+\int_1^\infty\mpr \biggl(J(X\vee
I)^{\trup{1}{l}}> 4\underline {\sigma}\sqrt{\frac{J^2}{n}\log t}+
\frac{2J^2}{n}\log t \biggr)\,\mathrm{d}t
\end{eqnarray*}
and note that
\begin{eqnarray*}
JI^{\trup{1}{l}} &<& 4\underline{\sigma}\sqrt{\frac{J^2}{n}\log t}+
\frac{2J^2}{n}\log t\quad\Leftrightarrow
\\
\frac{2(l
-1)J}{n}+\frac{\sqrt{8(l
-1)}\underline{\sigma}}{\sqrt n} &<&4\underline{\sigma}\sqrt{
\frac
{\log t}{n}}+ \frac{2J}{n}\log t.
\end{eqnarray*}
This is fulfilled if $t\geq \mathrm{e}^{l-1}$. Hence, with Massart's inequality
\eqref{eq.ConIn2.massart},
\begin{eqnarray*}
&&\me \bigl[ \mathrm{e}^{(\trup{n}{(2J^2)}) (\sqrt
{2\underline{\sigma
}^2+J(X\vee I)^{\trup{1}{l}}}-\sqrt{2}\underline{\sigma}
)^2} \bigr]
\\
&&\quad\leq1+\mathrm{e}^{l-1}-1+\int_{\mathrm{e}^{l-1}}^\infty
\mpr \biggl(X^{\trup{1}{l}}> 4\underline{\sigma}\sqrt{\frac{\log t}{n}}+
\frac{2J}{n}\log t \biggr)\,\mathrm{d}t
\\
&&\quad= \mathrm{e}^{l-1}+\int_{\mathrm{e}^{l-1}}^\infty
\mpr \biggl(\undz >(1+\epsilon)\me \undz+4\underline{\sigma}\sqrt{
\frac{\log t}{n}}+ \frac
{2J}{n}\log t \biggr)\,\mathrm{d}t
\\
&&\quad\leq \mathrm{e}^{l-1}+\int_{\mathrm{e}^{l-1}}^\infty
\exp\bigl(-\log t^2\bigr)\,\mathrm{d}t < \mathrm{e}^{l}.
\end{eqnarray*}
In summary, we have
\[
g_l(\me X)< \mathrm{e}^l.
\]
This is now inverted using equation \eqref{eq.inverse} to obtain
%
%
%
\[
\me X\leq \biggl(\frac{2lJ}{n}+\frac{4\sqrt{l}\underline{\sigma
}}{\sqrt n} \biggr)^l.
\]
This finishes the proof of the first claim. The second claim can be
deduced similarly using \citep{Massart00}, Theorem~4, (14).
\end{pf}

%
\begin{remark}
\label{rm.constants}
The constants in Lemma~\ref{lemma.boundstrunc} are not optimal. First, we
note that more restrictive assumptions allow one to replace Massart's
inequality \eqref{eq.ConIn2.massart} by sharper concentration inequalities
(e.g., from Klein and Rio \citep{Klein05} assuming
centered random vectors or from Bousquet \citep{Bousquet02} assuming
centered and identically
distributed random vectors) and permit therefore shaper bounds. Second,
instead of using
such concentration inequalities, one can work with the underlying log-Laplace
transforms directly. We found that this approach leads to slightly better
constants but also to a less accessible proof. Let us sketch the
approach:

One may first verify that for any $t>0$ and $a\geq
0$
%
%
\begin{equation}
\label{eq.firstloglap} \log\me \bigl[n(\undz-\me\undz)-a \bigr]_+^l\leq\log
\me \bigl[\mathrm{e}^{tn(\undz-\me\undz)}\bigr]+l\log(l/t)-l-ta.
\end{equation}
We can now use bounds for the log-Laplace transform $\log\me
[\mathrm{e}^{tn(\undz-\me\undz)}]$ of $n(\undz-\me\undz)$, for
example, from
\citep{Massart00}:
%
%
\begin{equation}
\label{eq.loglap} \log\me\bigl[\mathrm{e}^{tn(\undz-\me\undz)}\bigr]\leq
\frac
{vt^2}{1-2.5Kt}\qquad\mbox{with }v:={2n\underline{\sigma}^2+32Kn\me
\undz}.
\end{equation}
The bound \eqref{eq.loglap} with
$t:= (\sqrt{\frac{v}{l}}+2.5K )^{-1}$ can be inserted
into \eqref{eq.firstloglap} and the result can be simplified with
\[
( \sqrt{\alpha+\beta}+\gamma)\mathrm{e}^{
-\trup{\alpha/\delta}{(\sqrt{\alpha+\beta}+\gamma)}}\leq\sqrt {\beta}+2\gamma+
\delta
\]
for $\alpha,\beta,\gamma,\delta>0$. This then leads to the bound
\[
\bigl\llVert \bigl(\underline{Z}-(1+\epsilon)\me\underline{Z} \bigr)_+\bigr
\rrVert _l\leq \biggl(\frac{32}{\epsilon}+5 \biggr)\frac
{lK}{n}+
\frac{\sqrt{2l}\underline{\sigma}}{\sqrt n}.
\]
The quantity $\llVert  ((1-\epsilon)\me\underline{Z}-\underline
{Z} )_+\rrVert _l$
can be bounded similarly.
\end{remark}

We now use the above lemma to prove the following.
%
%
\begin{lemma}
\label{thm.ConIn2.wmass2}
Let $1\leq l\leq p$, $\epsilon>0$ and denote by $K>0$ the truncation
level. Then,
\[
\bigl\llVert \bigl({Z}-(1+\epsilon)\me\underline{Z} \bigr)_+\bigr\rrVert
_l\leq \biggl(\frac{64}{\epsilon}+5 \biggr)\frac{lK}{n}+
\frac{4\sqrt
l\underline{\sigma}}{\sqrt
n}+\frac{M^{\trup{p}{l}}}{K^{\trup{p}{l}-1}}
\]
and
\[
\bigl\llVert \bigl((1-\epsilon)\me\underline{Z}-{Z} \bigr)_+\bigr\rrVert
_l \leq \biggl(\frac{86.4}{\epsilon}+5 \biggr)\frac{lK}{n}+
\frac{4.7\sqrt
l\underline{\sigma}}{\sqrt
n}+\frac{M^{\trup{p}{l}}}{K^{\trup{p}{l}-1}}.
\]
\end{lemma}

\begin{pf}
The key idea of the proof is to separate the bounded from the unbounded
quantities. We then develop bounds for $\llVert \overline{Z}\rrVert
_l$ via elementary means
and combine this with the above results to deduce the desired bounds.

We start with the proof of the first inequality. First, we split $Z$ in
a bounded and an unbounded part
\begin{eqnarray*}
Z &=&\max_{1\leq j \leq N}\bigl |\mprn Z(j)\bigr |
\\
&=&\max_{1\leq j \leq N}\bigl |\mprn\bigl( \undz(j)+\ovez(j)\bigr)\bigr |
\\
&\leq&\max_{1\leq j \leq N}\bigl(\bigl |\mprn\undz(j)\bigr |+\bigl |\mprn\ovez(j)\bigr |\bigr)
\\
&\leq&\underline{Z}+\overline{Z}
\end{eqnarray*}
and deduce with the triangle inequality that
%
%
\begin{eqnarray}
\label{eq.ConIn2.splitting2} && \bigl\llVert \bigl(Z-(1+\epsilon)\me\underline{Z} \bigr)_+\bigr
\rrVert _l
\nonumber
\\
&&\quad\leq\bigl\llVert \bigl(\underline{Z}+\overline{Z}-(1+\epsilon )\me
\underline{Z} \bigr)_+\bigr\rrVert _l
\nonumber
\\[-8pt]
\\[-8pt]
&&\quad\leq\bigl\llVert \bigl(\underline{Z}-(1+\epsilon)\me\underline {Z}
\bigr)_++\overline{Z}\bigr\rrVert _l
\nonumber
\\
&&\quad\leq\bigl\llVert \bigl(\underline{Z}-(1+\epsilon)\me \underline{Z}
\bigr)_+\bigr\rrVert _l+\llVert \overline{Z}\rrVert _l.
\nonumber
\end{eqnarray}
Now, we turn to the development of bounds for
$\llVert \overline{Z}\rrVert _l$. As above, with the help of
H\"older's and Chebyshev's inequalities, we
obtain for $1\leq i\leq n$
\[
\me\overline{\mathcal{E}}_i^l \leq\frac{M^p}{K^{p-l}}
\]
and therefore with the triangle inequality
%
%
\begin{equation}
\label{eq.ConIn2.Chebby} \llVert \overline{Z}\rrVert _l\leq\llVert \mprn
\mathcal{\overline {E}}\rrVert _l\leq\frac{M^{\trup{p}{l}}}{K^{\trup{p}{l}-1}}.
\end{equation}
Combining inequalities \eqref{eq.ConIn2.splitting2},
\eqref{eq.ConIn2.Chebby}, and the bound from Lemma~\ref{lemma.boundstrunc}
gives finally
\[
\bigl\llVert \bigl(Z-(1+\epsilon)\me\underline{Z} \bigr)_+\bigr\rrVert
_l \leq \biggl(\frac{64}{\epsilon}+5 \biggr)\frac{lK}{n}+
\frac{4\sqrt {l}\underline{\sigma}}{\sqrt
n} + \frac{M^{\trup{p}{l}}}{K^{\trup{p}{l}-1}}.
\]
This finishes the proof of the first part of the lemma. For the second
part, we note that
\begin{eqnarray*}
\undz&=&\max_{1\leq j \leq N}\bigl |\mprn\undz(j)\bigr |
\\
&=&\max_{1\leq j \leq N}\bigl |\mprn\bigl(Z(j)-\ovez(j)\bigr)\bigr |
\\
&\leq&\max_{1\leq j \leq N}\bigl(\bigl |\mprn Z(j)\bigr |+\bigl |\mprn\ovez(j)\bigr |\bigr)
\\
&\leq& Z+\overline{Z}
\end{eqnarray*}
and therefore $Z\geq\undz- \ovez$. Consequently,
\begin{eqnarray*}
&&\bigl\llVert \bigl((1-\epsilon)\me\underline{Z}-Z \bigr)_+\bigr\rrVert
_l
\\
&&\quad\leq\bigl\llVert \bigl((1-\epsilon)\me\underline{Z}-\undz +\ovez \bigr)_+
\bigr\rrVert _l
\\
&&\quad\leq\bigl\llVert \bigl((1-\epsilon)\me\underline{Z}-\undz \bigr)_++
\overline{Z}\bigr\rrVert _l
\\
&&\quad\leq\bigl\llVert \bigl((1-\epsilon)\me \underline{Z}-\undz \bigr)_+\bigr
\rrVert _l+\llVert \overline{Z}\rrVert _l.
\end{eqnarray*}
One can then proceed as in the first part.
\end{pf}

\begin{pf*}{Proof of Theorem~\ref{lemma.ConIn2.FirstCor}}
Set $K= (\frac{n}{l} )^{\trup{l}{p}}M$ in Lemma~\ref
{thm.ConIn2.wmass2} and use
Lemma~\ref{lemma.ConIn.Verg} to replace the truncated quantities by the
original ones.
\end{pf*}

%
%

\subsection{Proof of Theorem \texorpdfstring{\protect\ref{theorem.Massart2}}{4.1}}

Here, we prove Theorem~\ref{theorem.Massart2} with the help of
symmetrization and desymmetrization.

\begin{pf*}{Proof of Theorem~\ref{theorem.Massart2}}
The trick is to use symmetrization and desymmetrization arguments so
that we are
able to use \citep{Massart00}, Theorem~9, in a favorable way.

Beforehand, we define $Z_\epsilon:=\max_{1\leq j\leq N}|\frac
{1}{n}\sum_{i=1}^n
\epsilon_i Z_i(j)|$ with independent Rademacher random variables
$\epsilon_i$. Then, we symmetrize according to \citep{vdVaart00}, Lemma~2.3.6, with the function $\Phi(x)=(x-4\me Z)_+^l$ to obtain
\[
\me [Z-4\me Z ]_+^l\leq\me [2Z_\epsilon-4\me Z
]_+^l
\]
and we desymmetrize with the function $\Phi(x)=x$ to obtain
\[
\me [2Z_\epsilon-4\me Z ]_+^l\leq\me [2Z_\epsilon -
\me2Z_\epsilon ]_+^l.
\]
Hence,
%
%
\begin{equation}
\label{eq.Massartzwo1} \me [Z-4\me Z ]_+^l\leq2^l\me
\me_\epsilon [Z_\epsilon-\me Z_\epsilon ]_+^l,
\end{equation}
where we write here and in the following $\me_\epsilon$ for the
expectation and $\mpr_\epsilon$ for the probability w.r.t. the
Rademacher random variables. Next, we observe that
\begin{eqnarray*}
&&\me_\epsilon [Z_\epsilon-\me Z_\epsilon ]_+^l
\\
&&\quad= \int_0^\infty\mpr_\epsilon \bigl(
(Z_\epsilon-\me Z_\epsilon )_+^l>t \bigr)\,\mathrm{d}t
\\
&&\quad= \int_0^\infty\mpr_\epsilon
\bigl(Z_\epsilon>\me Z_\epsilon+ t^{\trup{1}{l}} \bigr)\,\mathrm{d}t
\\
&&\quad\leq\int_0^\infty\mpr_\epsilon
\Biggl(\max_{1\leq j\leq
N}\frac{1}{n}\sum
_{i=1}^n \epsilon_iZ_i(j)>
\me\max_{1\leq j\leq N}\frac{1}{n}\sum_{i=1}^n
\epsilon_iZ_i(j) + t^{\trup{1}{l}} \Biggr)\,\mathrm{d}t
\\
&&\qquad{}+\int_0^\infty\mpr_\epsilon
\Biggl(\max_{1\leq j\leq N}-\frac{1}{n}\sum
_{i=1}^n\epsilon_iZ_i(j)>
\me\max_{1\leq j\leq
N}\frac{1}{n}\sum_{i=1}^n
\epsilon_iZ_i(j) + t^{\trup{1}{l}} \Biggr)\,\mathrm{d}t
\\
&&\quad\leq2\int_0^\infty\mpr_\epsilon
\Biggl(\max_{1\leq j\leq
N}\frac{1}{n}\sum
_{i=1}^n \epsilon_iZ_i(j)>
\me\max_{1\leq j\leq N}\frac{1}{n}\sum_{i=1}^n
\epsilon_iZ_i(j) + t^{\trup{1}{l}} \Biggr)\,
\mathrm{d}t.
\end{eqnarray*}
In a final step, we apply Massart's inequality \citep{Massart00}, Theorem~9, with
\[
L^2=\max_{1\leq j\leq N}\sum_{i=1}^n
\bigl(2\bigl |Z_i(j)\bigr | \bigr)^2\leq 4n\mprn
\mathcal{E}^2,
\]
where $\mprn\mathcal{E}^2:=\frac{1}{n}\sum_{i=1}^n\mathcal
{E}_i^2$. This yields
\begin{eqnarray*}
&& 2\int_0^\infty\mpr_\epsilon \Biggl(\max
_{1\leq j\leq N}\frac
{1}{n}\sum_{i=1}^n
\epsilon_iZ_i(j)>\me\max_{1\leq j\leq N}
\frac{1}{n}\sum_{i=1}^n
\epsilon_iZ_i(j) + t^{\trup{1}{l}} \Biggr)\,\mathrm{d}t
\\
&&\quad\leq 2\int_0^\infty\exp \biggl( -
\frac{nt^{\trup{2}{l}}}{8\mprn
\mathcal{E}^2} \biggr)\,\mathrm{d}t
\\
&&\quad= 2 \biggl(\frac{8}{n} \biggr)^{\trup{l}{2}} \bigl(\mprn
\mathcal{E}^2\bigr)^{\trup{l}{2}}\int_0^\infty
\exp \bigl( -t^{\trup{2}{l}} \bigr)\,\mathrm{d}t
\\
&&\quad= 2 \biggl(\frac{8}{n} \biggr)^{\trup{l}{2}}\bigl(\mprn\mathcal
{E}^2\bigr)^{\trup
{l}{2}}\frac{l\Gamma (\trup{l}{2} )}{2}.
\end{eqnarray*}
With inequality \eqref{eq.Massartzwo1}, this gives
\[
\me [Z-4\me Z ]_+^l\leq2^ll \biggl(\frac{8}{n}
\biggr)^{\trup{l}{2}}\me \bigl[\mprn\mathcal{E}^2
\bigr]^{\trup
{l}{2}}\Gamma \biggl(\frac{l}{2} \biggr).
\]
Finally, due to the triangle inequality, it holds that
\[
\me \bigl[\mprn\mathcal{E}^2 \bigr]^{\trup{l}{2}}\leq\me [\mprn
\mathcal{E} ]^{l}\leq M^l
\]
and hence
\[
\me [Z-4\me Z ]_+^l\leq l\Gamma \biggl(\frac{l}{2} \biggr)
\biggl(\frac{32}{n} \biggr)^{\trup{l}{2}}M^l.
\]
\upqed
\end{pf*}

\subsection{Proof of Theorem \texorpdfstring{\protect\ref{sec4.Corrollary1}}{4.2}}

We eventually derive Theorem~\ref{sec4.Corrollary1} using
truncation. After some auxiliary results, we derive
Lemma~\ref{lemma.helpwoe}. This lemma settles the bounded part of the
problem. It is then used to proof Lemma~\ref{theorem.mainwoe} which is
a slight
generalization of the main theorem. Finally, we derive
Theorem~\ref{sec4.Corrollary1} as a simple corollary.

We begin with two auxiliary lemmas.\vadjust{\goodbreak}
%
%
\begin{lemma}
\label{lemma.helpfirst}
Let $W$ be a centered random variable with values in $[-A,A]$, $A\geq
0$, such that $\me W^2\leq1$. Then,
\[
\me\mathrm{e}^{\trup{W}{A}}\leq1 + \frac{1}{A^2}.
\]
\end{lemma}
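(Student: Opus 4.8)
The plan is to prove the bound $\me e^{W/A} \leq 1 + 1/A^2$ for a centered random variable $W$ taking values in $[-A,A]$ with $\me W^2 \leq 1$. The natural approach is to expand the exponential in its power series and exploit both the centering condition $\me W = 0$ and the boundedness $|W| \leq A$ to control the higher-order moments by the second moment. First I would write
\begin{equation*}
  \me e^{W/A} = \sum_{k=0}^\infty \frac{\me W^k}{k!\, A^k} = 1 + \sum_{k=2}^\infty \frac{\me W^k}{k!\, A^k},
\end{equation*}
where the $k=0$ term is $1$ and the $k=1$ term vanishes by centering. Everything then hinges on bounding the tail sum by $1/A^2$.

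For the tail, the key observation is that for $k \geq 2$ we have $|W|^k = |W|^{k-2}\, W^2 \leq A^{k-2}\, W^2$ pointwise, since $|W| \leq A$. Taking expectations gives $\me |W|^k \leq A^{k-2}\, \me W^2 \leq A^{k-2}$, using $\me W^2 \leq 1$. Substituting this into the series yields
\begin{equation*}
  \sum_{k=2}^\infty \frac{\me W^k}{k!\, A^k} \leq \sum_{k=2}^\infty \frac{\me |W|^k}{k!\, A^k} \leq \sum_{k=2}^\infty \frac{A^{k-2}}{k!\, A^k} = \frac{1}{A^2}\sum_{k=2}^\infty \frac{1}{k!}.
\end{equation*}

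The one remaining point is that the bare series $\sum_{k=2}^\infty 1/k! = e - 2 \approx 0.718$ is already strictly less than $1$, so in fact the crude bound $\sum_{k=2}^\infty 1/k! \leq 1$ suffices and gives the claimed inequality immediately. I would therefore just note $e - 2 < 1$ and conclude $\me e^{W/A} \leq 1 + (e-2)/A^2 \leq 1 + 1/A^2$. A small technical caveat to handle is the degenerate case $A = 0$: then $W \equiv 0$ and the inequality reads $1 \leq \infty$ (or one interprets $W/A$ as $0$), so the statement holds trivially; I would mention this briefly or simply assume $A > 0$ throughout the main computation.

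The main obstacle here is essentially nonexistent—this is a routine moment-bounding argument—so the only genuine care required is bookkeeping: justifying the interchange of expectation and infinite summation (dominated convergence, valid since $W$ is bounded so $e^{W/A}$ is bounded), and keeping track of signs when passing from $\me W^k$ to $\me |W|^k$ for odd $k$. I expect the whole proof to occupy only a few lines once the moment bound $\me|W|^k \leq A^{k-2}$ is established.
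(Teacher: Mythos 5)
Your proposal is correct and follows essentially the same route as the paper: both expand the exponential, kill the first-order term by centering, bound $\me|W|^m\leq A^{m-2}\me W^2\leq A^{m-2}$ using boundedness, and sum the series to get $(e-2)/A^2\leq 1/A^2$. The only cosmetic difference is that the paper first passes to $e^{|W|/A}-1-|W|/A$ before expanding, whereas you take absolute values term by term; the content is identical.
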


\begin{pf}
We follow well known ideas (see, e.g., \cite{Buhlmann11}, Chapter~14):
\begin{eqnarray*}
\me\mathrm{e}^{\trup{W}{A}} &=& 1 + \me \biggl[ \mathrm{e}^{\trup
{W}{A}}-1-
\frac
{W}{A} \biggr]
\\
&\leq&1 + \me \biggl[ \mathrm{e}^{\trup{|W|}{A}}-1-\frac
{|W|}{A} \biggr]
\\
&=&1+\sum_{m=2}^\infty\frac{\me|W|^m}{m!A^m}
\\
&\leq&1+\sum_{m=2}^\infty\frac{A^{m-2}}{m!A^m}
\\
&\leq&1 +\frac{1}{A^2}.
\end{eqnarray*}
\upqed
\end{pf}

%
\begin{lemma}
\label{lemma.ConIn4.Combi}
Let $C_m^n:=|\{ (i_1,\ldots,i_m)^T\in\{ 1,\ldots,n \}^m \dvtx
\forall
j\in\{ 1,\ldots,m \} \exists j'\in\{ 1,\ldots,m \},j'\neq
j,\allowbreak  i_j=i_{j'}\}|$ for $m,n\in\mn$. Then,
\[
C_m^n\leq m! \biggl(\frac{n}{2}
\biggr)^{\lfloor\trup{m}{2}\rfloor}.
\]
\end{lemma}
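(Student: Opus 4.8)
The plan is to read $C_m^n$ as the number of words $(i_1,\dots,i_m)\in\{1,\dots,n\}^m$ in which no letter occurs exactly once, i.e.\ every letter that occurs does so at least twice; such a word can involve at most $\lfloor m/2\rfloor$ distinct letters, which is the source of the exponent in the bound. First I would stratify these words by the number $k$ of distinct letters used. Specifying such a word amounts to (i) partitioning the index set $\{1,\dots,m\}$ into $k$ blocks, each of size at least $2$ (the blocks being the sets of positions carrying a common letter), and (ii) assigning $k$ \emph{distinct} letters to these blocks. Writing $P(m,k)$ for the number of partitions of an $m$-element set into $k$ blocks each of size $\ge 2$, and noting that the number of injective letter-assignments is the falling factorial $n(n-1)\cdots(n-k+1)\le n^k$, this yields the exact identity $C_m^n=\sum_{k=1}^{\lfloor m/2\rfloor} n(n-1)\cdots(n-k+1)\,P(m,k)$.

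Next I would factor out the dependence on $n$. Since $n\ge 1$ and $k\le\lfloor m/2\rfloor$, each falling factorial is at most $n^k\le n^{\lfloor m/2\rfloor}$, so
\[
C_m^n\le n^{\lfloor m/2\rfloor}\sum_{k=1}^{\lfloor m/2\rfloor}P(m,k)=n^{\lfloor m/2\rfloor}\,B(m),
\]
where $B(m):=\sum_k P(m,k)$ is just the number of partitions of $\{1,\dots,m\}$ into blocks all of size at least $2$. The claimed inequality $C_m^n\le m!\,(n/2)^{\lfloor m/2\rfloor}$ then follows as soon as I establish the purely combinatorial estimate $B(m)\le m!/2^{\lfloor m/2\rfloor}$, which no longer involves $n$.

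To prove $B(m)\le m!/2^{\lfloor m/2\rfloor}$ I would use strong induction on $m$, starting from the recursion obtained by conditioning on the block containing the element $1$: if that block has size $s\ge 2$, its remaining $s-1$ members are chosen among the other $m-1$ elements and the rest is partitioned freely, giving $B(m)=\sum_{s=2}^{m}\binom{m-1}{s-1}B(m-s)$ with $B(0)=1$ and $B(1)=0$. Substituting the inductive hypothesis and cancelling the factorials $(m-s)!$ reduces the inductive step to verifying that $\sum_{s\ge 2}2^{\,d_s}/(s-1)!\le m$, where $d_s:=\lfloor m/2\rfloor-\lfloor(m-s)/2\rfloor\le\lceil s/2\rceil$; the left-hand series is bounded term-by-term by $\sum_{s\ge2}2^{\lceil s/2\rceil}/(s-1)!$, which converges to a constant below $6$, so the step succeeds for all $m\ge 6$, and the finitely many smaller values are checked directly.

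The main obstacle is exactly this last step: pinning down the constant of the convergent series and disposing of the base cases, since the bound is essentially tight for even $m$, where the $k=m/2$ stratum (all letters appearing exactly twice, i.e.\ perfect matchings) already forces the full power $(n/2)^{m/2}$ and leaves little slack. As a cross-check on the whole scheme I would note the exponential generating function identity $\sum_m C_m^n\,x^m/m!=(e^x-x)^n$, which both confirms the stratified formula above and makes transparent that the highest power of $n$ occurring in $C_m^n$ is indeed $n^{\lfloor m/2\rfloor}$.
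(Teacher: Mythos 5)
Your argument is correct, but it takes a genuinely different route from the paper's. The paper fixes $m$ and inducts on the alphabet size $n$: conditioning on the set of positions carrying the new letter $n+1$ (which must be empty or of size $\ge 2$) gives the recursion $C_m^{n+1}=C_m^n+\sum_{k=2}^{m-2}\binom{m}{k}C_{m-k}^n+1$, and the binomial theorem then telescopes the bound from $(n/2)^{m/2}$ to $((n+1)/2)^{m/2}$ for even $m$, with odd $m$ reduced to even via $C_m^n< m\,C_{m-1}^n$. You instead factor the $n$-dependence out completely through the exact stratification $C_m^n=\sum_{k} n(n-1)\cdots(n-k+1)\,P(m,k)$ and reduce everything to the $n$-free estimate $B(m)\le m!/2^{\lfloor m/2\rfloor}$ for the number of partitions into blocks of size $\ge 2$, proved by induction on $m$. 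Your reduction and numerical claims check out: the dominating series $\sum_{s\ge 2}2^{\lceil s/2\rceil}/(s-1)!=2\sum_{t\ge 1}2^{\lfloor t/2\rfloor}/t!\approx 5.09<6$, so the inductive step closes for all $m\ge 6$ (indeed for every $m$ except $m=4$, where the term-by-term sum is $4.67>4$ but one verifies $B(4)=4\le 4!/2^2=6$ directly), and the base values $B(0),\dots,B(5)=1,0,1,1,4,11$ all satisfy the bound. What your route buys is an exact formula and the generating-function identity $\sum_m C_m^n x^m/m!=(e^x-x)^n$, which makes the exponent $\lfloor m/2\rfloor$ and the near-tightness at even $m$ (perfect matchings) transparent; what the paper's route buys is that it never leaves exact binomial identities and so needs no convergent-series constant and only the trivial alphabet sizes $n\le 2$ as base cases.
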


\begin{pf}
The proof of this lemma is a simple counting exercise. We start with
the case $m\leq2$. One finds easily that $C_1^n=0$ and $C_2^n=n$,
which completes the case $m\leq2$. Next, we consider the case $m>2$.
To this end, we note that $C_m^1=1$, $C_3^2=2$ and $C_m^2\leq2^m\leq
m!$ for $m>3$. This completes the cases $n\leq2$. Now, we do an
induction in $n$. So we let $n\geq2$ and find
\begin{eqnarray*}
C_m^{n+1}&=&C_m^n +
\frac{m(m-1)}{2!}C_{m-2}^n
\\
&&{}+\frac
{m(m-1)(m-2)}{3!}C_{m-3}^n 
+
\cdots+\frac{m(m-1)\cdots3}{(m-2)!}C_{2}^n+1.
\end{eqnarray*}
By induction, this yields
\begin{eqnarray*}
C_m^{n+1}&\leq&m! \biggl[ \biggl(\frac{n}{2}
\biggr)^{\lfloor
\trup{m}{2}\rfloor}+\frac{1}{2!} \biggl(\frac{n}{2}
\biggr)^{\lfloor
\trup{(m-2)}{2}\rfloor}
\\
&&\hphantom{m! \biggl[}{}+\frac{1}{3!} \biggl(\frac{n}{2} \biggr)^{\lfloor\trup
{(m-3)}{2}\rfloor}+\cdots
+\frac{1}{(m-2)!} \biggl(\frac{n}{2} \biggr)^{\lfloor\trup
{2}{2}\rfloor} \biggr] +1.
\end{eqnarray*}
We now assume that $m$ is even. So,
\begin{eqnarray*}
C_m^{n+1}&\leq& m! \biggl[ \biggl(\frac{n}{2}
\biggr)^{\trup
{m}{2}}+\frac
{1}{2!} \biggl(\frac{n}{2}
\biggr)^{\trup{m}{2}-1}+\frac{1}{3!} \biggl(\frac{n}{2}
\biggr)^{\trup{m}{2}-2}+\cdots+\frac{1}{(m-2)!} \biggl(\frac{n}{2} \biggr)
\biggr]+1
\\
&=& m! \Biggl[ \biggl(\frac{n}{2} \biggr)^{\trup{m}{2}}+
\frac
{1}{2!} \biggl(\frac{n}{2} \biggr)^{\trup{m}{2}-1}+\sum
_{j=2}^{\trup
{m}{2}-1} \biggl(\frac{1}{(2j-1)!}+
\frac{1}{ (2j )!} \biggr) \biggl(\frac{n}{2} \biggr)^{\trup{m}{2}-j}
\Biggr]+1
\\
&\leq& m! \Biggl[ \biggl(\frac{n}{2} \biggr)^{\trup{m}{2}}+
\frac
{m}{4} \biggl(\frac{n}{2} \biggr)^{\trup{m}{2}-1}+\sum
_{j=2}^{\trup
{m}{2}-1}{\frac{m}{2}\choose j} \biggl(
\frac{1}{2} \biggr)^j \biggl(\frac{n}{2}
\biggr)^{\trup{m}{2}-j}+ \biggl(\frac{1}{2} \biggr)^{\trup{m}{2}} \Biggr]
\\
&=& m!\sum_{j=0}^{\trup{m}{2}}{\frac{m}{2}
\choose j} \biggl(\frac
{1}{2} \biggr)^j \biggl(
\frac{n}{2} \biggr)^{\trup{m}{2}-j}= m! \biggl( \frac{n+1}{2}
\biggr)^{\lfloor\trup{m}{2}\rfloor}.
\end{eqnarray*}
This completes the proof for $m>2$ with $m$ even. We note finally, that for
odd $m>2$ we have $C_m^n<mC_{m-1}^n\leq m! (\frac{n}{2}
)^{\lfloor\trup{m}{2}\rfloor}$.
\end{pf}

We now settle the bounded part of the problem. Bounded random variables
are in particular subexponential, so one could apply results from
\citep{Viens07}, for
example. But for our purposes, a
direct treatment as in the following is more suitable.
%
%
\begin{lemma}
\label{lemma.helpwoe}
Let $l\in\mn$ and $p,A\geq2$. Then, for the truncation level
$K=\frac{ A}{2}+\sqrt{\frac{A^2}{4}-1}$,
\[
\biggl\llVert \biggl(\max_{1\leq j \leq
N}(\mprn-P)\undz(j)-AM
\frac{\log(N)}{n} \biggr)_+\biggr\rrVert _l\leq \frac{M}{A}+
\frac{lAM}{n}.
\]
\end{lemma}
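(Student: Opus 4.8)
The goal is a moment bound for the centered, truncated supremum $\max_{1\leq j\leq N}(\mprn-P)\undz(j)$, where each summand is now bounded in absolute value by $K$. The natural plan is the classical exponential-moment (Chernoff / PAC-Bayes-flavoured union-bound) argument specialized to bounded increments. First I would fix a parameter $t>0$ and bound $\me\exp(t\,\max_j n(\mprn-P)\undz(j))$ by the sum over $j$ of $\me\exp(t\,n(\mprn-P)\undz(j))$, replacing the maximum by a sum of $N$ terms at the cost of a factor $N$ that will ultimately produce the $\log N$ in the centering. Because the summands $\undz_i(j)-\me\undz_i(j)$ are independent across $i$, centered, and bounded by $2K$ in absolute value (after centering, the range is at most $2K$), the exponential moment factorizes into a product over $i$.

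The engine is Lemma~\ref{lemma.helpfirst}: applied to $W:=t(\undz_i(j)-\me\undz_i(j))$ scaled appropriately, a centered variable supported in $[-A',A']$ with second moment at most $1$ satisfies $\me e^{W/A'}\leq 1+1/(A')^2\leq e^{1/(A')^2}$. The precise choice of the truncation level $K=\tfrac{A}{2}+\sqrt{\tfrac{A^2}{4}-1}$ is exactly what is needed to make the scaling work out: it is the larger root of $K^2-AK+1=0$, i.e. $K-\tfrac1K=A-\tfrac2K$ or more to the point $K+\tfrac{1}{K}=A$ is \emph{not} quite it — rather $K$ solves $K^2-AK+1=0$, so that $\tfrac1K+\tfrac{K}{\,\cdot\,}$ balances the linear-plus-quadratic terms coming from the $1+1/(A')^2$ factor. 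I would pick $t$ of order $\sqrt n/K$ (or $n/K$, depending on the normalization) so that each factor $1+1/(A')^2$ contributes a controlled amount, and multiplying $n$ such factors and the variance bound $\sino\me\undz_i(j)^2\leq M^2$ (via $|Z_i(j)|\le\mathcal E_i$ and $\me\mathcal E_i^2\le M^2$, using $p\ge 2$) yields $\me e^{tn(\mprn-P)\undz(j)}\leq \exp\big(\tfrac{t^2 M^2}{2}\cdot(\text{const})\big)$.

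Combining the union bound over $j$ with this sub-Gaussian-type factor gives a tail of the form $\mpr(\max_j(\mprn-P)\undz(j)\geq u)\leq N\exp(-c\,n u^2/\text{stuff})$, and optimizing over $t$ (or equivalently reading off the Legendre transform) produces a deviation of the shape $\tfrac{M}{A}+\tfrac{lAM}{n}$ once the centering $AM\tfrac{\log N}{n}$ absorbs the $\log N$ from the union bound. To convert the tail into the $l$th moment $\me[\cdots]_+^l$, I would integrate $\int_0^\infty \mpr([\cdots]_+^l>s)\,ds=\int l\,u^{l-1}\mpr(\cdots>u)\,du$ against the tail, which is where the combinatorial factor controlled by Lemma~\ref{lemma.ConIn4.Combi} — bounding the number of diagonal multi-indices by $m!\,(n/2)^{\lfloor m/2\rfloor}$ — enters to keep the resulting series summable and to pin down the explicit constants in $\big(\tfrac{M}{A}+\tfrac{lAM}{n}\big)^l$. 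The main obstacle I anticipate is bookkeeping: getting the constants to land \emph{exactly} at $M/A$ and $lAM/n$ (rather than merely up to an absolute factor) forces the specific quadratic defining $K$, and the careful pairing of the $1/(A')^2$ term from Lemma~\ref{lemma.helpfirst} against the $\log N$ centering and the power $l$ is delicate; Lemma~\ref{lemma.ConIn4.Combi} is the tool that makes the higher-moment accounting tractable rather than merely subexponential.
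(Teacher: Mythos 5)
Your overall instinct (exponential moments of bounded centered increments, a union bound over $j$ producing the $\log N$ in the centering, Lemma~\ref{lemma.helpfirst} as the engine) matches the paper's, but three specific points go wrong. First, the role of $K=\frac{A}{2}+\sqrt{\frac{A^2}{4}-1}$: it is the larger root of $K^2-AK+1=0$, which is \emph{exactly} the identity $K+\frac{1}{K}=A$ that you dismiss as ``not quite it''. The paper uses it as follows: $|\undz_i(j)|\leq K$ by truncation, and since $Z_i(j)$ is centered, $|\me\undz_i(j)|=|\me\ovez_i(j)|\leq\me\overline{\mathcal{E}}_i\leq K^{1-p}\leq 1/K$ (H\"older and Chebyshev, $p\geq2$, $K\geq1$, $M=1$), so the centered summand is bounded by $K+1/K=A$ --- not by $2K$ as you assert. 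This is precisely what lets Lemma~\ref{lemma.helpfirst} be applied at scale $A$, giving $\me e^{n(\mprn-P)\undz(j)/A}\leq(1+1/A^2)^n$ with the \emph{fixed} exponential scale $1/A$ per summand; no optimization over a free parameter $t$ is performed or needed.

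Second, and more seriously, your route from the exponential moment to the $l$-th moment (tail bound, then integration of $l\,u^{l-1}\mpr(\cdot>u)$) would not land on the stated constant: integrating a subexponential tail produces Gamma-function factors, as in Theorem~\ref{theorem.Massart2}. The paper instead uses a convexity/Jensen argument: the map $x\mapsto e^{x^{1/l}}$ is convex on $[(l-1)^l,\infty)$, whence
\[
\me\Bigl[\max_{1\leq j\leq N}(\mprn-P)\undz(j)-A\tfrac{\log N}{n}\Bigr]_+^l\leq\frac{A^l}{n^l}\log^l\Bigl(\max_{1\leq j\leq N}\me e^{n(\mprn-P)\undz(j)/A}+e^{l-1}\Bigr)\leq\frac{A^l}{n^l}\log^l\bigl((1+\tfrac{1}{A^2})^ne^l\bigr),
\]
where the $-\log N$ in the centering cancels the factor $N$ from the union bound, and $\log(1+1/A^2)\leq1/A^2$ together with $a+b<eab$ for $a,b\geq1$ yields exactly $(1/A+lA/n)^l$. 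This convexity step is the missing idea in your sketch. Third, Lemma~\ref{lemma.ConIn4.Combi} plays no role here: it is used only to control the \emph{unbounded} part $\me[(\mprn-P)\overline{\mathcal{E}}]^l$ in the proof of Lemma~\ref{theorem.mainwoe}, so invoking it for the higher-moment bookkeeping of this bounded-part lemma is a misattribution.
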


\begin{pf}
We assume w.l.o.g. $M=1$ and observe that
\[
\me \bigl[\undz_i(j)-\me\undz_i(j)
\bigr]^2\leq\me\undz _i(j)^2\leq1.
\]
Moreover, because of H\"older's and Chebyshev's inequalities and $K\geq1$,
it holds that
\[
\bigl |\undz_i(j)-\me\undz_i(j)\bigr |\leq\bigl |\undz_i(j)\bigr |+\bigl |
\me\undz_i(j)\bigr |\leq K+\frac{1}{K}=A.
\]
These observations, the independence of the random variables and Lemma~\ref{lemma.helpfirst} yield then
\begin{eqnarray*}
&&\me\mathrm{e}^{\trup{n(\mprn-P) \undz(j)}{A}}
\\[-2pt]
&&\quad= \me\mathrm{e}^{\trup{\sum_{i=1}^n(\undz_i(j)-\me\undz
_i(j))}{A}}
\\[-2pt]
&&\quad\leq \biggl(1 +\frac{1}{A^2} \biggr)^n.
\end{eqnarray*}
Next, one checks easily, that the map $x\mapsto\mathrm{e}^{x^{\trup
{1}{l}}}$ is
convex on the set $[(l-1)^{l},\infty)$. Hence, using Jensen's
inequality again, we obtain
\begin{eqnarray*}
&& \biggl\llVert \biggl(\max_{1\leq j \leq
N}(\mprn-P) \undz(j)-A
\frac{\log(N)}{n} \biggr)_+\biggr\rrVert _l
\\[-2pt]
&&\quad\leq \frac{A}{n}\Bigl\llVert \Bigl(\max_{1\leq j \leq
N}n(
\mprn-P) \undz(j)/A-\log(N) \Bigr)_+\vee(l-1)\Bigr\rrVert _l
\\[-2pt]
&&\quad\leq \frac{A}{n}\log \Bigl(\me\exp \Bigl( \Bigl(\max
_{1\leq j
\leq
N}n(\mprn- P) \undz(j)/A-\log(N) \Bigr)_+\vee(l-1) \Bigr)
\Bigr)
\\[-2pt]
&&\quad= \frac{A}{n}\log \Bigl(\me\exp \Bigl( \Bigl(\max
_{1\leq j
\leq
N}n(\mprn- P) \undz(j)/A-\log(N) \Bigr)\vee(l-1) \Bigr)
\Bigr)
\\[-2pt]
&&\quad\leq \frac{A}{n}\log \Bigl(\max_{1\leq j \leq
N}\me\exp
\bigl(n(\mprn- P) \undz(j)/A\bigr)+ \mathrm{e}^{l-1} \Bigr)
\\[-2pt]
&&\quad\leq \frac{A}{n}\log \biggl( \biggl(1+\frac{1}{A^2}
\biggr)^n+\mathrm{e}^{l-1} \biggr).
\end{eqnarray*}
We finally note that $a+b< \mathrm{e}ab$ for all $a,b\geq1$ and find
\begin{eqnarray*}
&&\biggl\llVert \biggl(\max_{1\leq j \leq
N}(\mprn-P) \undz(j)-A
\frac{\log(N)}{n} \biggr)_+\biggr\rrVert _l
\\[-2pt]
&&\quad< \frac{A}{n}\log \biggl( \biggl(1+\frac{1}{A^2}
\biggr)^n\mathrm{e}^{l} \biggr)
\\[-2pt]
&&\quad= \frac{A}{n} \biggl(\log \biggl(1+\frac{1}{A^2}
\biggr)^n+\log \mathrm{e}^{l} \biggr)
\\[-2pt]
&&\quad\leq\frac{1}{A}+\frac{lA}{n}.
\end{eqnarray*}
\upqed
\end{pf}

The results above can now be used to derive a generalization of the
main problem.
%
%
\begin{lemma}
\label{theorem.mainwoe}
Assume that the random variables $Z_i(j)$ are centered. Then, for
$p,A\geq2$, $l\in\mn$, and $p\geq l$,
\[
\label{eq.mainwoe} \biggl\llVert \biggl(Z-AM\frac{\log(2N)}{n} \biggr)_+\biggr
\rrVert _l\leq \biggl(2 \biggl(\frac{2}{A}
\biggr)^{p-1}+(l!)^{\trup{1}{l}}\sqrt{\frac
{2}{n}}+
\frac{1}{A}+\frac{lA}{n} \biggr)M.
\]
\end{lemma}
\begin{pf}
The idea is again to separate the bounded and the unbounded quantities.
The part with the unbounded quantities is treated by elementary
means and Lemma~\ref{lemma.ConIn4.Combi}. For the bounded part, we use
Lemma~\ref{lemma.helpwoe}.

First, we assume w.l.o.g. that $M=1$ and set $K=\frac{
A}{2}+\sqrt{\frac{A^2}{4}-1}$. Then, we deduce with the triangle
inequality that
%
%
\begin{eqnarray}
\label{eq.ConIn4.splitbeg} && \biggl\llVert \biggl(\max_{1\leq j \leq
N}\mprn Z(j)-A
\frac{\log(N)}{
n} \biggr)_+\biggr\rrVert _l
\nonumber
\\
&&\quad= \biggl\llVert \biggl(\max_{1\leq j \leq
N}(\mpr_n-P)Z(j)-A
\frac{\log(N)}{n} \biggr)_+\biggr\rrVert _l
\nonumber
\\
&&\quad\leq\biggl\llVert \biggl(\max_{1\leq j \leq
N}(
\mpr_n-P)\overline{Z}(j)+\max_{1\leq j \leq
N}(
\mpr_n-P)\underline{Z}(j)-A\frac{\log(N)}{
n} \biggr)_+\biggr\rrVert
_l
\\
&&\quad\leq\biggl\llVert \Bigl(\max_{1\leq j \leq
N}(
\mpr_n-P)\overline{Z}(j) \Bigr)_++ \biggl(\max_{1\leq j \leq
N}(
\mpr_n-P)\underline{Z}(j)-A\frac{\log(N)}{
n} \biggr)_+\biggr\rrVert
_l
\nonumber
\\
&&\quad\leq\Bigl\llVert 
\Bigl({ \max_{1\leq j \leq
N}}(
\mpr_n-P)\overline{Z}(j) \Bigr)_+
\Bigr\rrVert
_l+\biggl\llVert 
\biggl({ \max
_{1\leq j \leq
N}}(\mpr_n-P)\underline{Z}(j)-A
\frac{\log(N)}{
n} \biggr)_+
\biggr\rrVert _l.
\nonumber
\end{eqnarray}
So, we are able to treat the unbounded and the bounded quantities
separately. We begin with the unbounded quantities. We first note that
\[
\bigl[(\mpr_n-P)\overline{Z}(j) \bigr]_+^l \leq \bigl((
\mpr_n+P)\overline{\mathcal{E}} \bigr)^l= \bigl((\mpr
_n-P)\overline{\mathcal{E}}+2P\overline{\mathcal{E}}
\bigr)^l.
\]
Hence,
%
%
\begin{equation}
\label{eq.ConIn4.Splitun} \Bigl\llVert \Bigl(\max_{1\leq j \leq
p}(
\mpr_n-P)\overline{Z}(j) \Bigr)_+\Bigr\rrVert _l \leq2P
\overline{\mathcal{E}}+\bigl\llVert (\mpr_n-P)\overline{\mathcal {E}}
\bigr\rrVert _l.
\end{equation}
H\"older's and Chebyshev's inequalities are then used to find
%
%
\begin{equation}
\label{eq.ConIn4.first} P\overline{\mathcal{E}}=\frac{1}{n}\sum
_{i=1}^n\me\overline {\mathcal{E}}_i
\leq\frac{1}{n}\sum_{i=1}^n\bigl({\me
\mathcal{E}_i^p}\bigr)^{\trup
{1}{p}}({\me
1_{\{\mathcal{E}_i>K\}}})^{1-\trup{1}{p}} \leq\frac{1}{K^{p-1}}.
\end{equation}
To bound the left over quantity, we note that for all $i$ and $p\geq
q\in\mn$
\[
\me [\overline{\mathcal{E}}_i-\me\overline{\mathcal
{E}}_i ]^q \leq2^q
\]
so that
\[
\me \bigl[(\overline{\mathcal{E}}_{i_1} - \me\overline{\mathcal
{E}}_{i_1})\cdots(\overline{\mathcal{E}}_{i_l} - \me\overline {
\mathcal{E}}_{i_l}) \bigr] \leq2^l.
\]
Moreover, it holds that
\[
\me \bigl[(\overline{\mathcal{E}}_{i_1} - \me\overline{\mathcal
{E}}_{i_1})\cdots(\overline{\mathcal{E}}_{i_l} - \me\overline {
\mathcal{E}}_{i_l}) \bigr]=0
\]
for all $i_1,\ldots,i_l$ such that there is a $j$ with $i_j\neq
i_{j'}$ for
all $j'\neq j$. With Lemma~\ref{lemma.ConIn4.Combi}, we then get for $n>1$
%
%
\begin{equation}
\label{eq.ConIn4.second} \me \bigl[(\mpr_n-P)\overline{\mathcal{E}}
\bigr]^{l}\leq \frac{2^lC_l^n}{n^l}\leq\frac{2^{l}
l!}{n^l} \biggl(
\frac{n}{2} \biggr)^{\lfloor\trup{l}{2}\rfloor} \leq l!\sqrt{\frac{2}{n}}^l.
\end{equation}
Clearly, this also holds for $n=1$ and $l=1$. For $n=1$ and $l>1$, we
note that
\[
\me \bigl[(\mpr_n-P)\overline{\mathcal{E}} \bigr]^{l}
\leq2^l \leq l! \sqrt2^l,
\]
so that inequality \eqref{eq.ConIn4.second} holds for all $n$ and $l$ under
consideration. Inserting then inequalities \eqref{eq.ConIn4.first} and
\eqref{eq.ConIn4.second} in inequality \eqref{eq.ConIn4.Splitun}, we
obtain the result for the unbounded part
%
%
\begin{equation}
\label{eq.ConIn4.boundunbound} \Bigl\llVert \Bigl(\max_{1\leq j \leq p}(
\mpr_n-P)\overline{Z}(j) \Bigr)_+\Bigr\rrVert _l \leq
\frac{2}{K^{p-1}}+(l!)^{\trup{1}{l}}\sqrt{\frac{2}{n}}.
\end{equation}
Next, we plug the result of Lemma~\ref{lemma.helpwoe} and
inequality \eqref{eq.ConIn4.boundunbound} in
inequality \eqref{eq.ConIn4.splitbeg} to derive
\[
\biggl\llVert \biggl( \max_{1\leq j \leq
N}\mprn Z(j)-A\frac{\log(N)}{n}
\biggr)_+\biggr\rrVert _l \leq2 \biggl(\frac{2}{A}
\biggr)^{p-1}+(l!)^{\trup{1}{l}}\sqrt {\frac
{2}{n}}+
\frac{1}{A}+\frac{lA}{n}.
\]
Finally, we define $Z(j+N):=-Z(j)$ for $1\leq j \leq N$. We
then get
\begin{eqnarray*}
\biggl\llVert \biggl(Z-A\frac{\log(2N)}{n} \biggr)_+\biggr\rrVert
_l&=&\biggl\llVert \biggl( \max_{1\leq j \leq
2N}\mprn Z(j)-A
\frac{\log(2N)}{n} \biggr)_+\biggr\rrVert _l
\\
&\leq&2 \biggl(\frac{2}{A} \biggr)^{p-1}+(l!)^{\trup{1}{l}}
\sqrt {\frac
{2}{n}}+\frac{1}{A}+\frac{lA}{n}
\end{eqnarray*}
replacing $N$ by $2N$ in the results above.
\end{pf}

Theorem~\ref{sec4.Corrollary1} is now a simple corollary.
\begin{pf*}{Proof of Theorem~\ref{sec4.Corrollary1}}
Set $A=2\sqrt n$ in Lemma~\ref{theorem.mainwoe}.
\end{pf*}

\section*{Acknowledgements}

We acknowledge partial financial support as
members of the
German-Swiss research group FOR916 (Regularization and
Qualitative Constraints) with Grant number 20PA20E-134495/1. We also thank
Micha\"el Chichignoud and Mohamed Hebiri for their valuable comments. Finally,
we thank the editor and the referees for their helpful suggestions.


%

\printhistory

\end{document}